\newcommand\C{\mathbb{C}}
\renewcommand\P{\mathbb{P}}
\newcommand\R{\mathbb{R}}
\newcommand\ii{{\rm i}}
\newcommand\B{\mathcal{B}}
\newcommand\sA{\mathcal{A}}
\newcommand\cI{\mathcal{I}}
\newcommand\cJ{\mathcal{J}}
\DeclareMathOperator\rank{rank}
\DeclareMathOperator\sym{sym}
\DeclareMathOperator\Skew{skew}
\DeclareMathOperator\Herm{Herm}
\DeclareMathOperator\Res{Res}
\theoremstyle{plain}
\newtheorem{Thm}{Theorem}[section]
\newtheorem{Prop}[Thm]{Proposition}
\newtheorem{Lemma}[Thm]{Lemma}
\newtheorem{Conjecture}[Thm]{Conjecture}
\newtheorem*{Thm*}{Theorem}
\newtheorem*{Prop*}{Proposition}
\newtheorem*{Cor*}{Corollary}
\newtheorem*{Lemma*}{Lemma}
\newtheorem*{Conjecture*}{Conjecture}
\newtheorem*{Conjecture4M-4}{The $\mathbf{4M-4}$ Conjecture \cite{BCMN:13}}
\theoremstyle{definition}
\newtheorem{Def}[Thm]{Definition}
\newtheorem{Example}[Thm]{Example}
\newtheorem{Remark}[Thm]{Remark}
\newtheorem*{Constr*}{Construction}
\newtheorem*{Def*}{Definition}
\newtheorem*{Example*}{Example}
\newtheorem*{Remark*}{Remark}
\newcommand{\Omit}[1]{\textcolor{red}{#1}}
\renewcommand{\Omit}[1]{}
\begin{document}
\title[An algebraic characterization of injectivity in phase retrieval]{An algebraic characterization of injectivity\\ in phase retrieval}
\author{Aldo Conca}
\author{Dan Edidin}
\author{Milena Hering}
\author{Cynthia Vinzant}

\date{}

\begin{abstract} 
A complex frame is a collection of vectors that span $\mathbb{C}^M$ and define measurements, 
called intensity measurements, on vectors in $\mathbb{C}^M$.  In purely mathematical terms, 
the problem of phase retrieval is to recover a complex vector from its intensity measurements, namely 
the modulus of its inner product with these frame vectors. 
We show that any vector is uniquely determined (up to a global phase factor)
 from $4M-4$ generic measurements. To prove this, we identify the set of frames
 defining non-injective measurements with the projection of a real variety and bound its dimension. 

 \end{abstract}
\maketitle

\section{Introduction} 
In signal processing, a signal $x\in \C^M$ often cannot be measured
directly. Instead, one can only measure the absolute values of its
inner product with a fixed set of vectors $\Phi~=~\{\phi_1, \ldots,
\phi_N\} \in \C^M$. Here we take $\C^M$ with the 
inner product $\langle x , y \rangle = \sum_{m=1}^Mx_m \overline{y_m}$. 

An $N$-element complex \emph{frame} $\Phi$ is a collection of vectors $\phi_1, \ldots  , \phi_N$ which span $\C^M$. 
A complex frame $\Phi = \{\phi_n\}_{n=1}^N \subset \C^M$
defines $N$ \emph{intensity measurements} of a vector $x\in \C^M$, 
\begin{equation} \label{eq:intensity}
|\langle \phi_n, x\rangle|^2 \;\; = \;\; \phi_n^* x x^* \phi_n \;\;\;
\text{ for } \;\; n=1, \hdots, N,
\end{equation}
where we use $v^*$ to denote the conjugate transpose of a vector (or matrix) $v$.

The problem of \textit{phase retrieval} is to 
reconstruct a vector $x\in \C^M$ from its intensity measurements. Note that 
multiplying $x$ by a scalar of unit modulus does not 
change the measurements \eqref{eq:intensity}, so we can only reconstruct $x$
up to a global phase factor.  
For phase retrieval to be possible, any two vectors 
$x$ and $y$ with the same intensity measurements must differ by a scalar multiple 
of norm one,  namely $x = e^{\ii \theta} y$.
In other words, the non-linear map
\begin{equation}\label{eq:APhi}
\sA_\Phi \colon (\C^M/S^1)  \rightarrow  (\R_{\geq 0})^N \;\;
\text{ given by  }\;\; x \mapsto \left(| \langle x, \phi_n \rangle |^2 \right)_{n=1, \hdots, N}
\end{equation}
is injective, where $(\C^M/S^1)$ is obtained by identifying $x\in \C^M$ with $e^{\ii \theta} x$ for every $\theta \in [0,2\pi] $. 

Our main result states that $4M-4$ generic intensity measurements suffice to determine a
vector in $\C^M$. This proves part \!(b) of the ``$4M-4$ Conjecture'' made in \cite{BCMN:13}.

\begin{Thm}\label{thm:main}
If $N \geq 4M-4$, then for a generic frame $\Phi $ the map $\sA_\Phi$ is injective.
\end{Thm}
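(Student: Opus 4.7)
My plan is to recast the injectivity question as a dimension count for the projection of a real algebraic incidence variety, along the lines indicated in the abstract. The key observation is that $\sA_\Phi(x) = \sA_\Phi(y)$ if and only if $\phi_n^*(xx^* - yy^*)\phi_n = 0$ for all $n$, and that $x$ and $y$ agree modulo $S^1$ precisely when $xx^* = yy^*$. Setting $Q = xx^* - yy^*$, the failure of injectivity of $\sA_\Phi$ is equivalent to the existence of a nonzero Hermitian matrix $Q$ expressible as such a difference with $\phi_n^* Q \phi_n = 0$ for all $n$. These $Q$ are exactly the nonzero Hermitian $M\times M$ matrices of rank at most two that are not of rank-two definite signature; call this real algebraic variety $X$.

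The next step is to compute $\dim_\R X$. The top stratum of $X$ consists of Hermitian matrices of rank two with signature $(1,1)$; any such $Q$ can be written as $xx^* - yy^*$ for linearly independent $x, y \in \C^M$, and the fiber of the parametrization $(x, y)\mapsto xx^* - yy^*$ over $Q$ is a principal homogeneous space for the indefinite unitary group $U(1,1)$, which has real dimension $4$. Hence the top stratum of $X$ has real dimension $4M-4$, and the lower-rank stratum (multiples of $\pm uu^*$) is strictly smaller, giving $\dim_\R X = 4M-4$.

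Form the real algebraic incidence variety
\[
Y \;=\; \bigl\{(\Phi, Q) \in (\C^M)^N \times X : \phi_n^* Q \phi_n = 0 \text{ for all } n\bigr\},
\]
and let $\pi\colon Y \to (\C^M)^N$ denote the first projection, whose image is exactly the set of non-injective frames. For any fixed $Q \in X$, the real quadratic form $\phi\mapsto \phi^* Q \phi$ on $\C^M\cong \R^{2M}$ has mixed sign (since $Q$ is indefinite), so its zero set is a real hypersurface of real dimension $2M-1$. Fibering $Y$ over $X$ therefore gives
\[
\dim_\R Y \;\leq\; N(2M-1) + (4M-4).
\]

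The essential refinement is that $X$ is invariant under the free $\R^\times$-action $Q \mapsto cQ$, which lifts to a free action on $Y$ preserving the fibers of $\pi$. Consequently every nonempty fiber of $\pi$ has real dimension at least $1$, so
\[
\dim_\R \pi(Y) \;\leq\; \dim_\R Y - 1 \;\leq\; N(2M-1) + 4M - 5.
\]
Since the ambient frame space $(\C^M)^N$ has real dimension $2MN$, the non-injective locus $\pi(Y)$ is a proper subset exactly when $N(2M-1) + 4M - 5 < 2MN$, which rearranges to $N \geq 4M-4$, yielding the theorem. The main delicate steps are the computation of $\dim_\R X$ (in particular, the $U(1,1)$ stabilizer calculation and control on the rank-one stratum) and the rigorous justification of the $-1$ improvement from the scaling symmetry; without the latter one would only obtain $N \geq 4M-3$, missing the conjectured bound by one.
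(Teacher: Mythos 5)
Your proposal is essentially the same dimension count as the paper's, but run entirely over the reals: you fiber an incidence set over the rank-$\leq 2$ Hermitian matrices (real dimension $4M-4$, via the $U(1,1)$-torsor argument), bound the fibers by $N(2M-1)$, and recover the crucial extra $-1$ from the free $\R^\times$-scaling of $Q$, arriving at the same threshold $N\geq 4M-4$. The paper instead complexifies: since conjugation is not complex-algebraic, it writes $\Phi=U+\ii V$, $Q=X+\ii Y$ and then treats $U,V,X,Y$ as \emph{complex} matrices, obtaining a genuine complex projective variety $\B_{M,N}$; the two scalings are absorbed by projectivizing, and the dimension count ($4M-5$ for the projectivized rank-$\leq 2$ locus, $2MN-N-1$ for the fibers) uses the classical fiber-dimension theorem for complex projective varieties. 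What the complexification buys is that every dimension statement is textbook complex algebraic geometry and the bad locus is automatically contained in a hypersurface; what your real route buys is that it avoids the detour through complex points of $\B_{M,N}$ (and the attendant real-versus-complex subtleties the paper discusses), at the cost of needing real/semialgebraic machinery.

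Two points in your argument need to be shored up for it to be complete. First, your $X$ and $Y$ are semialgebraic, not algebraic (the conditions ``nonzero'' and ``not definite'' are not Zariski-closed, and $\pi(Y)$ is only semialgebraic by Tarski--Seidenberg), so every step of the count --- the bound $\dim_\R Y\leq N(2M-1)+\dim_\R X$, the computation $\dim_\R X=4M-4$ from the $U(1,1)$ stabilizer, and especially the claimed inequality $\dim_\R\pi(Y)\leq\dim_\R Y-1$ from the scaling action --- must be justified by the fiber-dimension theorem for semialgebraic maps (if every nonempty fiber has dimension $\geq d$, the image has dimension $\leq$ the source dimension minus $d$); these statements are true and standard, but you flag rather than prove them, whereas the paper gets them for free from the complex projective setting. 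Second, your conclusion only asserts that $\pi(Y)$ is a \emph{proper subset} of $(\C^M)^N$, which is strictly weaker than the theorem: ``generic'' here means a nonempty Zariski-open set of frames. You need the additional (standard) fact that a semialgebraic set of dimension $<2MN$ has Zariski closure of dimension $<2MN$, hence lies in the zero set of a nonzero real polynomial; the complement of that zero set is then the required Zariski-open set of frames $\Phi$ with $\sA_\Phi$ injective. With those two additions your argument is a correct, genuinely real-algebraic variant of the paper's proof.
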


By generic we mean that $\Phi$ corresponds to a point in a non-empty
Zariski open subset of $\C^{M\times N} \cong (\R^{M\times N})^2$ (see Section~\ref{sec:algebra}). In particular, this theorem  
implies that when $N\geq 4M-4$,  there is an open dense set of frames $\Phi$ (in the Euclidean topology on 
$\C^{M \times N}$) for which $\sA_{\Phi}$ is injective. 
Part \!(a) of the conjecture in \cite{BCMN:13} says that this result is tight, i.e. that 
for $N < 4M-4$ the map $\sA_{\Phi}$ is never injective. This part is still open.

The history of this problem in the context of finite frames will be discussed in Section~\ref{sec:background}.
There, we also define some necessary concepts from algebraic geometry, such as generic points and the dimension of algebraic sets. 
In Section~\ref{sec:proof} (specifically on page~\pageref{proof:main}) we prove Theorem~\ref{thm:main}. 
A polynomial vanishing on the set of frames giving non-injective measurements is found and discussed in Section~\ref{sec.definingeqtns}.
Finally, in Section~\ref{sec:4M-5} we discuss what our algebraic approach can say about injectivity 
with fewer measurements.  We end by rephrasing the open part of conjecture of \cite{BCMN:13} 
in the language of real algebraic geometry and operator theory.

\section{Background}\label{sec:background}
Here we give a short review of the history of phase retrieval in the context of finite frames and review some needed terminology from algebraic geometry.

\subsection{The phase retrieval problem}
Phase retrieval is an old problem in signal processing, and the
literature on this subject is vast. However, in the context of finite
frame theory it was first considered 
Balan, Casazza, and Edidin \cite{BCE:06}.
 In \cite[Theorem~3.3]{BCE:06}, the authors show 
that the map $\sA_{\Phi}$ \eqref{eq:APhi} is injective for a generic frame $\Phi$ when $N\geq 4M-2$.
However, Bodmann and Hammen exhibit an explicit family of frames with
$4M-4$ vectors for which injectivity holds, which suggests the possibility of a better bound \cite{BoHa:13}. 
On the other hand  Heinosaari, Mazzarella and 
Wolf \cite{HMW:13}  used embedding theorems in homotopy theory
to show that $N\geq(4 + o(1))M$ is necessary for the injectivity of $\sA_{\Phi}$. 
Recently, Bandeira, Cahill, Mixon, and Nelson  \cite{BCMN:13} conjectured the following.

\begin{Conjecture4M-4} 
Consider a frame $\Phi = \{\phi_n\}_{n=1}^N \subseteq \C^M$ and the mapping 
$\sA_{\Phi} : (\C^M/S^1) \rightarrow (\R_{\geq 0})^N$ taking a vector $x$ to its intensity measurements 
$ \left(| \langle x, \phi_n \rangle |^2 \right)_{n=1, \hdots, N}$.
If $M\geq 2$ then the following hold.
\begin{itemize}
\item[(a)] If $N < 4M-4$, then $\sA_{\Phi}$ is not injective. 
\item[(b)] If $N\geq 4M-4$, then $\sA_{\Phi}$ is injective for generic $\Phi$.
\end{itemize}
\end{Conjecture4M-4}

In \cite{BCMN:13}, this conjecture was proved for $M=2,3$. Our 
Theorem~\ref{thm:main} establishes part \!(b). \medskip

Injectivity of the map $\sA_{\Phi}$ implies that phase retrieval is
  possible, but the problem of effectively reconstructing a vector
  from its intensity measurements is quite difficult. There have been many papers devoted to
  determining efficient reconstruction algorithms. For references 
we direct the reader to \cite{BCMN:13}.

\begin{Remark}
In \cite{BCE:06}, Balan, Casazza, and Edidin characterized frames giving injective measurements in the \textit{real} case. 
Precisely,
\cite[Theorem 2.8]{BCE:06} says that 
a real frame $\Phi$ defines injective measurements (on $\R^M/\{\pm1\}$)
if and only if $\Phi$ satisfies the {\em finite complement property}, 
which means that for every subset ${\mathbf S} \subset \{1, \ldots , N\}$ either 
${\{\phi_n\}}_{n \in {\mathbf S}}$ or its complement ${\{\phi_n\}}_{n \in {\mathbf S}^c}$ spans $\R^M$. 
In particular, if $N < 2M-1$ then the corresponding map $\sA_{\Phi}$ cannot be injective,  and if $N \geq 2M-1$ then for a  
generic frame $\Phi$, $\sA_{\Phi}$ is injective.

It would be very interesting to have an analogous characterization for complex frames. 
As a first step in this direction, in Section~\ref{sec.definingeqtns} we describe some polynomials that vanish on the set of 
 frames $\Phi$ for which $\sA_{\Phi}$ is non-injective.
\end{Remark}

\begin{Remark}
A frame $\Phi$ determines an $M$-dimensional subspace of $\C^N$ by taking 
the row span of the $M\times N$ matrix   whose columns are the vectors $\phi_n$, 
$1\leq n \leq N$. 
It was observed in \cite[Proposition 2.1]{BCE:06}, 
that if  
$\Phi$ and $\Phi'$ determine the same subspace in $\C^N$, then 
 $\sA_{\Phi}$ is injective if and only if $\sA_{\Phi'}$ is injective. In other 
 words, injectivity of $\sA_{\Phi}$ only 
 depends on subspace determined by $\Phi$.
 This subspace corresponds to a point in the Grassmannian $G(M,N)$ of $M$-dimensional subspaces of $\C^N$. 
 Thus there is a subset of $G(M,N)$ parameterizing frames for which $\sA_{\Phi}$ is injective. This approach
was applied in \cite{BCE:06}. 
\end{Remark} \smallskip

\subsection{Terminology from algebraic geometry}
\label{sec:algebra} 

Let $\mathbb{F}$ be a field (specifically $\mathbb{F} = \R$ or $\mathbb{F}=\C$). 
A subset of $\mathbb{F}^d$ defined by the vanishing of finitely many polynomials in $\mathbb{F}[x_1, \hdots, x_d]$ is called an {\em affine variety}. 
If these polynomials are homogeneous, then their vanishing defines a subset of projective space $\P(\mathbb{F}^{d})$, which is called a {\em projective variety}. 

The {\em Zariski topology} on $\mathbb{F}^d$ (or $\P(\mathbb{F}^d)$) is defined by declaring affine (resp. projective) varieties to be closed subsets. 
Note that a Zariski closed set is also closed in the Euclidean topology.
The complement of a variety is a {\em Zariski open} set.
A non-empty Zariski open set is open and dense in the
Euclidean topology. We say that a \emph{generic point} of $\mathbb{F}^d$ (or
$\P(\mathbb{F}^d)$) has a certain property if there is a non-empty Zariski open
set of points having this property.

The space of complex frames ${\mathcal F}(M,N)$ can be identified with $M\times N$ matrices of full rank, so it is a Zariski-open set in $\C^{M \times N}$. 
For the statement of  Theorem~\ref{thm:main} we identify $\C^{M \times N}$ with $(\R^{M \times N})^2$
and view ${\mathcal F}(M,N)$ as an open subset of $(\R^{M \times N})^2$.
Theorem~\ref{thm:main} then states that for $N\geq 4M-4$, there is a Zariski open subset $\mathcal{U}$ of ${\mathcal F}(M,N)$
 such that for every frame $\Phi$ corresponding to a point of $\mathcal{U}$, the map $\sA_{\Phi}$ is injective.  

In our main proof, we also rely heavily on the notion of the
dimension of a variety defined over $\C$.  For an
introduction and many equivalent definitions of the dimension of a
variety, see \cite[\S11]{Har:95} or \cite[Chapter 9]{CLO:07}.  In
particular, the \emph{dimension} of an 
irreducible variety (meaning that it is not the union of two proper subvarieties)
$X$ equals
the dimension of its tangent space at a generic point of
$X$.

We will also make use of the interplay between real and complex varieties.
Given a complex variety $X$ defined by equations with real coefficients 
we denote its set of real points by $X_{\R}$.

\section{Proof of Theorem \ref{thm:main}} \label{sec:proof}

We prove Theorem~\ref{thm:main} by showing that the subset of
$\C^{M\times N} \cong (\R^{M\times N})^2$ corresponding to frames 
$\Phi$ for which $\sA_{\Phi}$ is not injective 
is contained in a proper real algebraic
subset. The complement of this algebraic set is an open dense set
corresponding to frames $\Phi$ for which $\sA_{\Phi}$ is injective. 
A key ingredient of this proof is a reformulation, due to Bandeira, Cahill, Mixon, and Nelson \cite{BCMN:13}, 
of the injectivity of the map $\sA_\Phi \colon (\C^M/S^1) \to \R^N$ defined in \eqref{eq:APhi}.
\begin{Prop}[Lemma 9 \cite{BCMN:13}] \label{prop:dustin}
The map $\sA_\Phi$ is not injective if and only if there is a nonzero Hermitian matrix 
$Q \in \C^{M\times M}$ for which 
\begin{equation}\label{eq:rank2}
 \rank(Q) \leq 2 \;\;\text{ and }\;\;\phi_n^*Q\phi_n = 0 \;\; \text{ for each }1\leq n \leq N.
 \end{equation}
\end{Prop}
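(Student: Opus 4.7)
The plan is to prove both directions of the equivalence by using the key identity $|\langle \phi_n, x\rangle|^2 = \phi_n^* x x^* \phi_n$, which expresses the squared modulus as a Hermitian form, together with the spectral theorem for Hermitian matrices.

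For the forward direction, suppose $\sA_\Phi$ is not injective. Choose $x, y \in \C^M$ representing distinct classes in $\C^M/S^1$ with $\sA_\Phi(x) = \sA_\Phi(y)$, and set $Q = xx^* - yy^*$. Then $Q$ is Hermitian, has rank at most $2$ as a difference of two rank-one matrices, and
\[
\phi_n^* Q \phi_n \;=\; |\langle \phi_n, x\rangle|^2 - |\langle \phi_n, y\rangle|^2 \;=\; 0.
\]
The only subtlety is checking $Q\neq 0$. First, the frame condition forces both $x$ and $y$ to be nonzero: if, say, $x=0$ then $\langle y, \phi_n\rangle = 0$ for all $n$ would force $y=0$ since $\{\phi_n\}$ spans $\C^M$, contradicting distinctness in $\C^M/S^1$. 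With both nonzero, the equality $xx^*=yy^*$ would imply $y = e^{\ii\theta}x$ for some $\theta$ (compare a nonzero column of each side), again contradicting distinctness. Hence $Q\neq 0$.

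For the converse, suppose such a $Q$ exists. Since $Q$ is Hermitian of rank $\leq 2$, the spectral theorem gives $Q = \lambda_1 u_1 u_1^* + \lambda_2 u_2 u_2^*$ with real $\lambda_i$ and orthonormal $u_i$, where we allow $\lambda_i=0$. First I would rule out the case that the nonzero eigenvalues have the same sign: if, say, $\lambda_1, \lambda_2 \geq 0$ are not both zero, then
\[
0 \;=\; \phi_n^* Q \phi_n \;=\; \lambda_1 |\langle \phi_n,u_1\rangle|^2 + \lambda_2 |\langle \phi_n,u_2\rangle|^2
\]
would force $\langle \phi_n, u_i\rangle = 0$ for all $n$ whenever $\lambda_i > 0$, hence $u_i = 0$ by the spanning property, contradicting $Q\neq 0$. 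Thus $Q$ has signature $(1,0)$, $(0,1)$, or $(1,1)$, and in every case we can write $Q = xx^* - yy^*$ with $(x,y)\neq(0,0)$ and $x,y$ either zero or mutually orthogonal (by taking $x = \sqrt{\lambda_1}u_1$ when $\lambda_1>0$, and $y = \sqrt{-\lambda_2}u_2$ when $\lambda_2<0$, and zero otherwise). In any of these cases $x$ and $y$ represent distinct classes in $\C^M/S^1$, while
\[
|\langle \phi_n, x\rangle|^2 - |\langle \phi_n, y\rangle|^2 \;=\; \phi_n^* Q \phi_n \;=\; 0
\]
for all $n$, so $\sA_\Phi(x)=\sA_\Phi(y)$ and $\sA_\Phi$ is not injective.

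The main obstacle, minor as it is, lies in the converse: one has to use the frame hypothesis that $\{\phi_n\}$ spans $\C^M$ to rule out the definite case (signature $(2,0)$ or $(0,2)$) and thereby bring $Q$ into the form $xx^* - yy^*$. Once the signature constraint is in hand, the rest of the argument is a clean translation between Hermitian rank-$\leq 2$ matrices annihilated by the quadratic forms $\phi_n^*(\cdot)\phi_n$ and pairs of phase-inequivalent vectors with the same intensity measurements.
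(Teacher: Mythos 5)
Your proof is correct. Note that the paper itself does not prove this proposition---it is quoted as Lemma~9 of \cite{BCMN:13}---and your argument is essentially the standard one behind that reference: the map $(x,y)\mapsto Q=xx^*-yy^*$ in one direction, and in the other the spectral decomposition of a nonzero Hermitian $Q$ of rank $\leq 2$, with the spanning property of the frame used to exclude the (semi)definite case. One small bookkeeping slip: your exclusion argument, as written, rules out every $Q$ whose nonzero eigenvalues all have one sign (including the rank-one signatures), so the surviving signature is $(1,1)$ only, not ``$(1,0)$, $(0,1)$, or $(1,1)$''; this is harmless, since your construction of $x$ and $y$ handles the rank-one cases anyway (there one of the two vectors is $0$, which still represents a class in $\C^M/S^1$ distinct from that of the nonzero one).
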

\medskip

We use this condition to translate injectivity of the map $\sA_{\Phi}$ into a question 
in algebraic geometry. Let $\C^{M\times M}_{\sym}$ denote the set of 
symmetric complex $M\times M$ matrices, and $\C^{M\times M}_{\Skew}$ the set of 
skew-symmetric complex $M\times M$ matrices.  

\begin{Def} Let $ {\B_{M,N}}$ denote the subset of 
$\P(\C^{M\times N}\times \C^{M\times N}) \times \P(\C^{M\times M}_{\sym} \times \C^{M\times M}_{\Skew})$  consisting of quadruples of matrices 
$([U, V], [X,Y])$ for which 
\begin{equation}\label{eq:BMNeq}
 \rank(X + \ii Y)\leq 2 \;\;\text{ and }\;\; 
u_n^T X u_n + v_n^T X v_n - 2u_n^T Yv_n=0 
\;\text{ for all }\; 1\leq n \leq N,  
 \end{equation}
where $u_n$ and $v_n$ are the $n$th columns of $U$ and $V$, respectively.

\end{Def}

The set $ {\B_{M,N}}$ is defined by the vanishing of polynomials in the entries of
$U$, $V$, $X$, and $Y$,  namely the $3\times 3$ minors of $X+\ii Y$ and the
polynomials 
$u_n^T X u_n + v_n^T X v_n - 2u_n^T Yv_n=0$. 
Note that these polynomials are homogeneous in the entries of $U,V$ and 
$X,Y$. In other words, they are invariant under scaling $U$ and $V$ by a non-zero scalar, and also $X$ and $Y$ by a non-zero scalar. 
Thus $ {\B_{M,N}}$ is a well-defined subvariety of the given product of projective spaces. 
Let $\pi_1$ be the projection onto the first coordinate,
\[\pi_1\; \colon \
\P\bigl(\C^{M\times N}\times \C^{M\times N}\bigl) \; \times \; \P\bigl(\C^{M\times M}_{\sym} \times \C^{M\times M}_{\Skew}\bigl) 
\;\;\;\to\;\;\; \P\bigl(\C^{M\times N}\times \C^{M\times N}\bigl).\]  
Recall that we use $X_{\R}$ to denote the set of real points of a complex variety $X$.

\begin{Prop}\label{prop:algcrit}
Let $\Phi = \{\phi_n\}_{n=1}^N  \subset \C^M$ be a complex frame. Write 
$\phi_n = u_n+ \ii v_n$ and let $U$ (resp. $V$) be the real matrix with 
columns $u_n$ (resp. $v_n$). Then the map $\sA_{\Phi}$ is injective 
if and only if $[U,V] $ does not belong to the projection $ \pi_1(( {\B_{M,N}})_{\R})$. 
\end{Prop}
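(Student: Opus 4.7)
The plan is to translate the criterion of Proposition~\ref{prop:dustin} (the BCMN reformulation) directly into the defining equations of $\B_{M,N}$ by splitting all the relevant matrices into their real and imaginary parts.

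First, I would record two standard facts. A Hermitian matrix $Q \in \C^{M\times M}$ can be written uniquely as $Q = X + \ii Y$ with $X$ a real symmetric matrix and $Y$ a real skew-symmetric matrix, and this decomposition identifies nonzero Hermitian matrices with nonzero real pairs $(X,Y) \in \R^{M\times M}_{\sym}\times \R^{M\times M}_{\Skew}$; moreover $\rank(Q)=\rank(X+\ii Y)$. Similarly, a complex vector $\phi_n$ decomposes as $\phi_n = u_n + \ii v_n$ with $u_n, v_n \in \R^M$, so that the real matrices $U$ and $V$ with columns $u_n$ and $v_n$ encode $\Phi$.

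The main step is then a direct expansion of $\phi_n^* Q \phi_n$. Writing everything out gives
\[
\phi_n^* Q \phi_n \;=\; (u_n - \ii v_n)^T(X + \ii Y)(u_n + \ii v_n),
\]
and the symmetry of $X$ together with the skew-symmetry of $Y$ collapses the cross-terms: the identities $u_n^T Y u_n = v_n^T Y v_n = 0$ and $v_n^T X u_n = u_n^T X v_n$ kill the imaginary part (as they must, since $\phi_n^* Q \phi_n$ is automatically real for Hermitian $Q$), while $v_n^T Y u_n = -u_n^T Y v_n$ doubles the mixed contribution. The result is exactly
\[
\phi_n^* Q \phi_n \;=\; u_n^T X u_n + v_n^T X v_n - 2\,u_n^T Y v_n,
\]
which is the $n$-th defining equation of $\B_{M,N}$ in \eqref{eq:BMNeq}.

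With the computation in hand, the proposition is essentially bookkeeping. By Proposition~\ref{prop:dustin}, $\sA_\Phi$ fails to be injective iff there is a nonzero Hermitian $Q$ with $\rank(Q)\le 2$ annihilating all the $\phi_n^*Q\phi_n$; under the decomposition above this is equivalent to the existence of a pair $(X,Y)\neq (0,0)$ with $X$ real symmetric, $Y$ real skew-symmetric, $\rank(X+\ii Y)\le 2$, and the $N$ scalar equations holding. That is precisely the statement that $([U,V],[X,Y])$ is a real point of $\B_{M,N}$, i.e.\ that $[U,V]\in \pi_1((\B_{M,N})_\R)$. The only small thing to verify, which I would do explicitly, is that the real points of the complex projective spaces in question are indeed represented by pairs with real entries up to nonzero real scalar (so that complex scaling does not enlarge the fiber artificially); this follows from the standard fact that $\P(\C^n)^{\R} = \P(\R^n)$ for the conjugation coming from a real structure. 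There is no real obstacle in the proof—the content is the symmetry-based cancellation identifying the single Hermitian equation $\phi_n^*Q\phi_n=0$ with the single real equation cutting out $\B_{M,N}$.
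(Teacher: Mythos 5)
Your proposal is correct and follows essentially the same route as the paper: decompose $\Phi$ and the Hermitian matrix $Q$ into real and imaginary parts, observe that $\phi_n^*Q\phi_n$ becomes $u_n^TXu_n+v_n^TXv_n-2u_n^TYv_n$, and then pass to the projectivization to identify non-injective frames with $\pi_1((\B_{M,N})_\R)$. The paper phrases this as a real-linear isomorphism between the incidence correspondence of (frame, Hermitian matrix) pairs and the affine cone over $\B_{M,N}$'s real points, while you carry out the expansion and the projectivization check explicitly, but the content is identical.
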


\begin{proof}
Consider the incidence correspondence ${\mathcal I}$ of frames and Hermitian matrices given by 
\[
{\mathcal I} =  \left\{ (\Phi, Q) \in \C^{M\times N}  \times 
\C^{M\times M}_{\Herm} \; : \; Q\neq 0, \;
\rank(Q)\leq 2, \text{ and }\phi_n^*Q\phi_n = 0 \text{ for }n=1,\hdots, N  \right\}.
\]

Note that the conditions 
for $\cI$ involve complex conjugation, an inherently real operation. Thus 
we cannot view $\cI$ as a complex algebraic variety. However, complex conjugation is a polynomial on the real parts. 
So we decompose $\Phi$ and $Q$ into their real and imaginary parts,
i.e., 
$\Phi = U + \ii V$, 
$\phi_n = u_n + \ii v_n$
with $u_n,v_n \in \R^M$ and 
$Q = X + \ii Y$, with $X$ symmetric and $Y$ skew
symmetric. 
Then $\cI$ is linearly isomorphic over $\R$ to the subset $ \cJ$, 
\[\cJ =  \{(U,V,X,Y) \;:\;  X+\ii Y \neq 0, \; \rank (X + \ii Y)\leq 2, \text{ and }
u_n^T X u_n + v_n^T X v_n - 2u_n^T Yv_n=0 \},
\]
of the real vector space $\R^{M\times N} \times \R^{M\times N} \times \R^{M\times M}_{\sym} \times
\R^{M \times M}_{\Skew}$. 

By Proposition~\ref{prop:dustin}, $\sA_{\Phi}$ is injective if and only if 
$(U,V)$ is not contained in the projection of ${\mathcal J}$ onto the first two coordinates. 
Since $ (\B_{M,N})_{\R}$ is the projectivization 
of $\cJ$, $(U,V)$ is not contained in this projection if and only if 
$[U,V]\notin \pi_1((\B_{M,N})_{\R})$.
\end{proof}

To bound the dimension of the projection $\pi_1(\B_{M,N})$ we find the 
dimension of $\B_{M,N}$ itself. 

\begin{Thm}\label{thm:dimB}
The projective complex variety $\B_{M,N}$ has dimension $2MN -N+ 4M -6$. 
\end{Thm}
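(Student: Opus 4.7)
The plan is to compute $\dim \B_{M,N}$ via the projection $\pi_2$ onto the second factor. I would first identify $\P(\C^{M\times M}_{\sym} \times \C^{M\times M}_{\Skew})$ with $\P(\C^{M\times M})$ through the linear isomorphism $(X,Y) \mapsto Q = X + \ii Y$, which is an isomorphism because every complex matrix decomposes uniquely into symmetric and skew-symmetric parts. Under this identification, the condition $\rank(X + \ii Y) \leq 2$ becomes the classical determinantal condition on $Q$, cutting out the irreducible projective variety $B = \{[Q] : \rank Q \leq 2\}$ of dimension $4M - 5$.

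Next I would perform a linear change of coordinates to simplify the $N$ quadratic equations. Setting $\phi_n = u_n + \ii v_n$ and $\psi_n = u_n - \ii v_n$, a short computation using that $X$ is symmetric and $Y$ is skew-symmetric shows
\[
u_n^T X u_n + v_n^T X v_n - 2 u_n^T Y v_n \;=\; \psi_n^T Q \phi_n.
\]
So after this (invertible) change of variables, the defining equations of $\B_{M,N}$ take the clean bilinear form $\psi_n^T Q \phi_n = 0$ for $n = 1, \hdots, N$.

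I would then compute the fibers of $\pi_2$. For fixed $[Q] \in B$ the $N$ equations decouple across $n$, so the fiber over $[Q]$ is the projectivization of $Z_Q^N$, where $Z_Q := \{(\phi, \psi) \in \C^{2M} : \psi^T Q \phi = 0\}$. Since $\psi^T Q \phi$ is a non-zero polynomial whenever $Q \neq 0$, the set $Z_Q$ is a hypersurface of dimension $2M - 1$, so the fiber has dimension $N(2M-1) - 1 = 2MN - N - 1$. Over the open stratum of rank-$2$ matrices, the associated symmetric bilinear form has rank $4$, making $Z_Q$ an irreducible quadric and the generic fiber irreducible; combined with irreducibility of $B$, the fiber dimension theorem yields the lower bound $\dim \B_{M, N} \geq (4M - 5) + (2MN - N - 1) = 2MN - N + 4M - 6$.

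The main obstacle is the matching upper bound, which requires controlling fiber dimension uniformly over all of $B$, including the rank-$1$ locus where the quadric $Z_Q = \{w^T \phi = 0\} \cup \{w^T \psi = 0\}$ (for $Q = ww^T$) degenerates into a union of two hyperplanes. The saving grace is that even in this degenerate case $Z_Q$ remains a hypersurface of dimension $2M - 1$, so the uniform bound $\dim \pi_2^{-1}([Q]) = 2MN - N - 1$ persists over the entire base. Applying the fiber dimension theorem to each irreducible component of $\B_{M,N}$ separately then yields the matching upper bound $\dim \B_{M,N} \leq (4M - 5) + (2MN - N - 1)$, completing the proof.
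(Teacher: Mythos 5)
Your proposal is correct and takes essentially the same route as the paper: project onto the matrix factor via $\pi_2$, identify the image with the rank-$\leq 2$ determinantal variety of dimension $4M-5$, note that every fiber is the projectivization of a product of $N$ copies of the hypersurface $\{\psi^T Q\phi=0\}$ and hence has dimension $2MN-N-1$, and conclude with the fiber-dimension theorem. Your two refinements --- passing to the coordinates $\phi_n=u_n+\ii v_n$, $\psi_n=u_n-\ii v_n$ so that the non-vanishing of $\psi^T Q\phi$ for $Q\neq 0$ is immediate (the paper instead verifies this by the coefficient computation of Lemma~\ref{lem:Qpoly}), and the explicit treatment of possible reducibility (componentwise upper bound, irreducible generic fibers for the lower bound) --- tighten but do not change the argument.
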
 

\begin{proof}
Let $\B'_{M,N}$ be the subvariety of
$\P(\C^{M\times N}\times \C^{M\times N}) \times \P(\C^{M\times M})$  consisting of triples 
of matrices $([U, V], [Q])$ satisfying
\[ \rank(Q)\leq 2 \;\;\text{ and }\;\; (u_n-\ii v_n)^TQ(u_n+\ii v_n)=0 \;\text{ for all }\; 1\leq n \leq N,  \]
where $u_n$ and $v_n$ are the $n$th columns of $U$ and $V$, respectively.  
This is a  well defined subvariety of the product of projective spaces because the 
defining equations are homogeneous in each set of variables.  

Note that $ {\B_{M,N}}$ and $\B'_{M,N}$ are linearly isomorphic. 
We can identify $\C^{M\times M}_{\sym} \times \C^{M \times M}_{\Skew}$ with $\C^{M\times M}$ by the map $(X,Y) \mapsto X+\ii Y =Q$. 
Indeed any complex matrix $Q$ can be uniquely written as $Q = X + \ii Y$ where 
$X = (Q+Q^T)/2$ is a complex symmetric matrix and $Y = (Q-Q^T)/(2\ii)$ is a complex skew symmetric matrix. 
Hence it suffices to prove that $\B'_{M,N}$ has the desired dimension. 

We define $\pi_1$ and $\pi_2$ to be projections onto the first and second coordinates, namely
\[\pi_1\bigl([U,V],[Q]\bigl) \;=\;[U,V] \;\;\; \text{ and } \;\;\; \pi_2\bigl([U,V],[Q]\bigl) \;=\; [Q].\]   
We will determine the dimension of $\B'_{M,N}$ by finding the dimension of its second 
projection $\pi_2(\B'_{M,N})$ and the dimension of the preimages $\pi_2^{-1}(Q)$ 
for $Q \in \C^{M\times M}$. 

The image of $\B'_{M,N}$ under the projection $\pi_2$ is precisely the set of rank $\leq 2$ matrices in $\P(\C^{M\times M})$. 
To see that any rank $\leq 2$ matrix $Q$ belongs to this image, take any non-zero vector
$(u,v) \in \C^M \times \C^M$ satisfying the equation $(u - \ii v )^TQ (u + \ii v)^T=0$. 
(Such a vector exists because the zero set of this polynomial is a hypersurface in $\C^M \times \C^M$.) 
Now let $U$ and $V$ be the matrices with $N$ repeated columns $u_n = u$ and $v_n = v$.  Then $([U,V],[Q])$ belongs 
to $\B'_{M,N}$ and $[Q]$ is its image under $\pi_2$. 

The set of matrices of rank $\leq 2$ in $\C^{M\times M}$ is an irreducible (affine) variety of dimension $4M-4$ \cite[Prop. 12.2]{Har:95}.
So its projectivization in $\P(\C^{M\times M})$ has dimension $4M-5$, meaning
\[ \dim(\pi_2(\B'_{M,N})) \;\; = \;\ 4M-5.\]

Now fix $Q\in \pi_2(\B'_{M,N})$. We will show that the preimage, $\pi_2^{-1}(Q)$ in $\P(\C^{M\times N}\times \C^{M\times N})$ 
has dimension $2MN-N-1$.  By Lemma~\ref{lem:Qpoly} below, $Q$ defines a nonzero polynomial equation
\[ (u_n-\ii v_n)^TQ(u_n+\ii v_n) = 0\]
on the $n$-th columns of $U$ and $V$. For each pair of columns $(u_n,v_n)$, this polynomial defines a hypersurface 
of dimension $2M-1$ in $(\C^M)^2$. Thus the preimage of $Q$ in $\B'_{M, N}$ is a product of 
$N$ copies of this hypersurface in $((\C^M)^2)^N\cong (\C^{M\times N})^2$, one for each pair of columns $(u_n, v_n)$ for $1\leq n \leq N$. 
Therefore after projectivization, this preimage $\pi_2^{-1}(Q)$ has dimension $N(2M-1)-1 = 2MN-N-1$.
We put these together using the following theorem about dimensions of projections and their fibers \cite[Cor. 11.13]{Har:95}.
It states that the dimension of the projective variety $\B'_{M,N}$ is the sum of the dimension of the image of the projection 
$\pi_2(\B'_{M,N})$ and the minimum dimension of a preimage $\pi_2^{-1}(Q)$.  Since the dimension of the 
preimages is constant, we conclude that 
\[ \dim(\B'_{M,N}) \;\; =  \;\; \dim( \pi_2(\B'_{M, N})) + \dim(\pi_2^{-1}(Q)) \;\;=\;\; (4M-5) + (2MN-N -1). \vspace{-21pt} \]
\end{proof} \smallskip

Above we used that any non-zero matrix $Q$ imposes a nontrivial condition on each pair $(u,v)$ of 
columns  of $U$ and $V$. We now verify this statement.  

\begin{Lemma}\label{lem:Qpoly}
For a nonzero matrix $Q=(q_{\ell m}) \in \C^{M\times M}$, the polynomial
\[q(u,v) \;\;=\;\; (u-\ii v)^T Q (u+\ii v) \;\; \in  \;\; \C[u_1, \hdots, u_{M}, v_1, \hdots, v_M],\]
where $u = (u_1, \hdots, u_M)^T$ and $v = (v_1, \hdots, v_M)^T$, 
is not identically  zero.
\end{Lemma}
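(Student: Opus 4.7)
The plan is to make an invertible linear change of variables so that $q(u,v)$ becomes a polynomial whose nonvanishing is obvious from a monomial basis argument. Specifically, introduce new indeterminates $z_k = u_k + \ii v_k$ and $w_k = u_k - \ii v_k$ for $k = 1, \ldots, M$. The inverse substitution $u_k = (z_k + w_k)/2$ and $v_k = (z_k - w_k)/(2\ii)$ is $\C$-linear and invertible, so it induces a $\C$-algebra automorphism of the polynomial ring
\[
\C[u_1,\ldots,u_M,v_1,\ldots,v_M] \;\cong\; \C[z_1,\ldots,z_M,w_1,\ldots,w_M].
\]
Under this automorphism, the polynomial $q(u,v) = (u - \ii v)^T Q (u + \ii v)$ becomes
\[
w^T Q z \;=\; \sum_{\ell,m = 1}^{M} q_{\ell m} \, w_\ell \, z_m.
\]

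The key observation is then immediate: the monomials $\{w_\ell z_m\}_{1 \le \ell, m \le M}$ are linearly independent in the polynomial ring $\C[z,w]$, so $\sum q_{\ell m} w_\ell z_m$ is the zero polynomial if and only if every coefficient $q_{\ell m}$ vanishes, i.e., $Q = 0$. Since our change of variables is an isomorphism of polynomial rings, it sends the zero polynomial to the zero polynomial and vice versa; therefore $q(u,v) \equiv 0$ in $\C[u,v]$ precisely when $Q = 0$, which proves the lemma by contrapositive.

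There is no real obstacle here; the only point requiring a moment of care is that the substitution is invertible over $\C$ (which fails over $\R$), so that the transformed polynomial and the original are simultaneously zero or nonzero. An alternative, essentially equivalent route would be to expand $q = u^T Q u + v^T Q v + \ii \, u^T(Q - Q^T) v$, decompose $Q$ into its symmetric and skew-symmetric parts, and read off a nonzero coefficient of either a pure $u$-monomial or a mixed $u_\ell v_m$ monomial; but the change-of-variables argument avoids the mild case analysis.
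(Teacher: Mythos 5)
Your proof is correct, and it takes a genuinely different route from the paper's. The paper expands $q(u,v)$ directly in the variables $u,v$, collecting the coefficients of $u_m^2+v_m^2$, of $u_\ell u_m+v_\ell v_m$, and of $u_\ell v_m - v_\ell u_m$; setting these to zero forces the diagonal entries, the symmetric parts $q_{\ell m}+q_{m\ell}$, and the skew parts $q_{\ell m}-q_{m\ell}$ all to vanish, hence $Q=0$. (This is exactly the ``alternative route'' you sketch at the end, and it has the minor virtue of matching the $X$ symmetric / $Y$ skew-symmetric decomposition used throughout the paper.) Your argument instead performs the invertible $\C$-linear substitution $z_k=u_k+\ii v_k$, $w_k=u_k-\ii v_k$, turning $q$ into $w^TQz=\sum_{\ell,m}q_{\ell m}w_\ell z_m$, whose coefficients are read off instantly since the monomials $w_\ell z_m$ are distinct. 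This is cleaner: it replaces the paper's three-way case analysis of coefficients with a single monomial-independence observation, and you correctly flag the one point of care, namely that the substitution is invertible only over $\C$ (it is precisely this complexification that lets $u+\ii v$ and $u-\ii v$ be treated as independent variables), so that a ring automorphism carries zero polynomials to zero polynomials in both directions. Both proofs are elementary and complete; yours is slightly slicker, the paper's is slightly more self-contained in its notation.
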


\begin{proof}
Computing explicitly the expression of $q(u,v)$, one has:  
$$
 q(u,v)=\sum_{1\leq m\leq M}q_{mm}(u_m^2+v_m^2)+ \sum_{1\leq \ell<m\leq M}(q_{\ell m}+q_{m \ell})(u_\ell u_m+v_\ell v_m)+\ii(q_{\ell m}-q_{m\ell})(u_\ell v_m-v_\ell u_m).
 $$
If the polynomial $q(u,v)$ is identically zero, then so are its coefficients, meaning
\begin{align*}
q_{mm}=0 &\;\;\mbox{ for all } 1\leq m\leq M, \\
q_{\ell m}+q_{m \ell}=0 &\;\;\mbox{ for all } 1\leq \ell <m \leq M, \text{ and }\\
q_{\ell m}-q_{m \ell}=0 &\;\; \mbox{ for all } 1\leq \ell <m \leq M.
\end{align*}
It follows that $Q$ is the zero-matrix. 
\end{proof}

By bounding the dimension of $\B_{M,N}$, we can bound the dimension of its
projection, which contains the frames $\Phi$ for which $\sA_{\Phi}$ is not injective, and thus prove our main theorem. 

\begin{proof}[Proof of Theorem~\ref{thm:main}] \label{proof:main}
By Proposition~\ref{prop:algcrit}, a pair of real $M\times N$ matrices $(U,V)$ for which $\sA_{U+\ii V}$ is not injective  
gives a point $[U,V]$ in $\pi_1((\B_{M,N})_{\R})\subset (\pi_1(\B_{M,N}))_{\R}$. 
The dimension of the 
projection is at most the dimension of the original variety  \cite[Cor. 11.13]{Har:95}. 
Thus the dimension of $\pi_1(\B_{M, N})$ can be bounded using Theorem~\ref{thm:dimB}:
\[ \dim\bigl(\pi_1(\B_{M, N})\bigl)  \;\; \leq \;\; \dim(\B_{M, N}) \;\; = \;\; 2MN + 4M-6 -N.\] 
When $N$ is $4M-4$ or higher, the dimension of this projection is \emph{strictly less} than $2MN-1$, 
which is the dimension of  $\P((\C^{M\times N})^2)$, the target of the projection $\pi_1$.  Thus the image of this projection 
is contained in a hypersurface defined by the vanishing of some polynomial.

This still holds when we restrict to real matrices $U$ and $V$.  In the real vector space $(\R^{M\times N})^2$, 
there is some nonzero polynomial that vanishes on all of the pairs $(U,V)$ for which $\sA_{U+\ii V}$ is not injective.
The complement of the zero-set of this polynomial is a Zariski open subset 
of  $(\R^{M\times N})^2$ and for any pair $(U, V)$ in this open set, $\sA_{U+\ii V}$ is injective.
\end{proof}

\section{A hypersurface containing bad frames}  \label{sec.definingeqtns} 
When $N\geq 4M-4$, the proof of our main theorem guarantees a polynomial that is zero on the set of 
frames $\Phi$ for which $\sA_{\Phi}$ is non-injective. Here we discuss how to obtain such a polynomial 
and compute its degree. 

Specifically, here we describe a polynomial in the variables $u_{mn}, v_{mn}$ for $1\leq m \leq M$ and 
$1\leq n \leq N$ vanishing on the projection $\pi_1(\B_{M,N})$. 
The projection from a product of projective spaces onto one of its coordinates, $\P^{d} \times \P^{e} \to \P^{d}$,
 is a closed map in the Zariski topology \cite[Theorem I.5.3]{Shaf:94}. Thus $\pi_1(\B_{M,N})$ is indeed
a subvariety of $\P((\C^{M \times N})^2)$, i.e. a closed set in the Zariski topology. The equations defining this projection 
can be in principle computed using symbolic computations involving eliminations, 
saturations and resultants. See for instance Chapter 3 in \cite{CLO:05} and the more advanced Chapter 12 and 13 in \cite{GKZ:94}.  
In particular we use \emph{resultants} (see \cite[Ch. 3, Thm. 2.3]{CLO:05}) which can be expressed by combining various determinants. 
The problem of expressing the resultant  in an efficient way, for example as a single determinant, is still a central topic in elimination theory, see for instance \cite{DaDi:01}.

\begin{Prop}
There is a nonzero polynomial in $\R[u_{11}, \hdots, u_{M (4M-4)}, v_{11}, \hdots, v_{M(4M-4)}]$ 
vanishing on the projection $\pi_1(\B_{M,4M-4})$ which has total degree $2\cdot (4M-4)\cdot 3^{(M-2)^2}$ 
and  has degree  $2\cdot 3^{(M-2)^2}$ in the set of column variables $\{u_{mn}, v_{mn}, \;m=1, \hdots, M\}$ for each $n$.
\end{Prop}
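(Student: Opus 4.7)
The plan is to construct the required polynomial as a classical Macaulay multivariate resultant of a square system of $M^2$ homogeneous equations in the $M^2$ entries of $Q \in \P(\C^{M\times M})$. By Proposition~\ref{prop:dustin} (transported via the isomorphism $(X,Y)\mapsto X+\ii Y = Q$ used in the proof of Theorem~\ref{thm:dimB}), the map $\sA_{U+\ii V}$ fails to be injective exactly when the $N = 4M-4$ linear forms $L_n(Q) := (u_n - \ii v_n)^T Q (u_n + \ii v_n)$ have a common projective zero with the rank-$\leq 2$ locus $X \subset \P(\C^{M \times M})$. Since $X$ has codimension $(M-2)^2$ and its prime ideal is generated by the $3 \times 3$ minors of $Q$, I would pick generic $\C$-linear combinations $f_1, \ldots, f_{(M-2)^2}$ of these minors; for a generic choice they form a regular sequence, so their common zero locus has pure codimension $(M-2)^2$ and contains $X$. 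Combined with the $L_n$, this produces exactly $N + (M-2)^2 = M^2$ homogeneous equations on $\P^{M^2 - 1}$.

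Next I would apply the classical Macaulay multivariate resultant \cite[Ch.~3]{CLO:05} to this square system: a polynomial $R$ in the coefficients of the $M^2$ equations which vanishes iff they have a common projective zero, and which is homogeneous of degree $\prod_{j \neq i} d_j$ in the coefficients of the $i$-th equation. With degrees $(d_1, \ldots, d_{M^2}) = (1, \ldots, 1, 3, \ldots, 3)$ (with $N$ ones and $(M-2)^2$ threes), it follows that $R$ has degree $3^{(M-2)^2}$ in the coefficients of each $L_n$ and degree $3^{(M-2)^2 - 1}$ in those of each $f_i$.

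The degree estimates claimed in the proposition then follow from the explicit form of the coefficients. Each coefficient of $L_n$ is $(u_n - \ii v_n)_i (u_n + \ii v_n)_j$, which is bi-homogeneous of degree $2$ in the $n$-th column variables $\{u_{mn}, v_{mn} : m = 1, \ldots, M\}$ and independent of the other columns; the coefficients of the $f_i$ are complex scalars and contribute no $(U,V)$-degree. Substituting, $R$ becomes a polynomial $R(U, V)$ that is homogeneous of degree $2 \cdot 3^{(M-2)^2}$ in each set of column variables, giving total degree $N \cdot 2 \cdot 3^{(M-2)^2} = 2(4M-4) \cdot 3^{(M-2)^2}$ by multi-homogeneity. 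By construction, $R(U, V) = 0$ on $\pi_1(\B_{M, 4M-4})$, and Theorem~\ref{thm:main} ensures $R \not\equiv 0$: for generic $(U, V)$ the map $\sA_{U + \ii V}$ is injective, so the system has no common solution at such $(U, V)$.

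The main technical obstacle is justifying Step 1: verifying that generic combinations of the minors produce a system for which the Macaulay resultant genuinely detects non-injectivity and does not vanish identically in $(U, V)$. After a generic choice of the $f_i$, the common zero locus of $f_1, \ldots, f_{(M-2)^2}$ has pure codimension $(M-2)^2$ and equals $X$ together with possibly extra components of the same dimension $4M-5$; a dimension count then shows that a generic codimension-$N$ linear subspace $\{L_n = 0\} \subset \P^{M^2 - 1}$ misses both $X$ and these extra components, which is precisely what is needed for $R(U, V) \not\equiv 0$. The resulting polynomial may contain extraneous factors coming from the extra components, but this does not affect the stated upper bounds on the degree.
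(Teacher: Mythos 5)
Your construction is correct and essentially identical to the paper's: both take the multivariate resultant (in the $M^2$ matrix variables) of $(M-2)^2$ generic linear combinations of the $3\times 3$ minors together with the $N=4M-4$ equations that are linear in $Q$ and quadratic in each column pair $(u_n,v_n)$, and read off the stated degrees from the multihomogeneity of the resultant. The one flaw is your appeal to Theorem~\ref{thm:main} for $R\not\equiv 0$ --- injectivity of $\sA_{U+\ii V}$ only rules out \emph{Hermitian} rank-$\leq 2$ solutions, not complex common zeros of the system --- but the dimension count in your final paragraph (which should be run via the incidence correspondence and Lemma~\ref{lem:Qpoly}, since the forms $L_n$ range over a restricted family rather than all linear forms, so ``generic linear subspace'' is not literally available) repairs this and is the right justification.
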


\begin{proof}
We compute this polynomial using resultants. Let $X$ and $Y$ be $M\times M$ symmetric and skew-symmetric 
matrices of variables $X=(x_{\ell m})$ and $Y=(y_{\ell m})$, and let $Z$ denote this collection of $M^2$ variables:
$$Z=\{x_{11},x_{12},\dots,x_{1M},x_{22}, \dots,x_{MM}, y_{12}, y_{13},\dots, y_{1M},y_{23}, \dots, y_{M-1M}\}.$$

Now set $E=(M-2)^2$. The $3\times 3$ minors of $X+\ii Y$ are cubic polynomials in the variables $Z$. 
Consider $E$ general linear combinations (with complex coefficients) of the $3\times 3$ minors, say $G_1,\dots,G_{E}$. 
To this set, add the $N=4M-4$ equations 
$$g_n\;=\;u_n^T X u_n + v_n^T X v_n - 2u_n^T Yv_n \; = 0 \;\;\;\; \text{ with }n=1,\dots,N$$ 
where $u_n$ and $v_n$ are the vector of variables $(u_{mn})_m$ and $(v_{mn})_m$.
In total we have $M^2$ polynomials in $M^2+2MN$ variables.  
To eliminate the variables $Z$ from our system of $M^2$ equations we take the resultant with respect to these variables
$$\Res(G_1,\dots,G_E,g_1,\dots,g_{N}).$$
This is a non-zero polynomial in the variables $u_{mn}$ and $v_{mn}$ that vanishes on $\pi_1(\B_{M,N})$. 
By \cite[Ch. 3, Theorem~3.1]{CLO:05}, such a resultant has total degree $2N3^E$
and it has degree $2\cdot 3^E$ in the entries of $u_n$ and $v_n$ for each $n=1,\dots,N$. 
\end{proof}
In practice, for small values of $M$ the collection of polynomials $G_1,\dots,G_{E}$ can be taken to be a subset of properly 
chosen $3\times 3$ minors (and not linear combinations of them). 
However for higher $M$ one needs to take linear combinations. 

When $N> 4M-4$, for every subset $S \subset \{1,\hdots, N\}$ of size $4M-4$, we can apply
the above construction to the corresponding columns of $U$ and $V$.   
The result is a nonzero polynomial vanishing on the set of bad frames and involving only the variables 
 $u_{mn}, v_{mn}$ where $n\in S$.

\begin{Example}[$M=2$, $N=4$]
Since the all matrices in $\C^{2\times 2}$ have rank $\leq 2$, the variety $\B_{2,4}$ is defined by the equations $g_n=0$ where
\[g_n \;\; = \;\;(u_{1n}^2+ v_{1n}^2 )x_{11} + 2(u_{1n} u_{2n} +  v_{1n} v_{2n} )x_{12} +(u_{2n}^2 + v_{2n}^2)x_{22}
+2(u_{2n} v_{1n} - u_{1n} v_{2n})y_{12}\]
for $n=1,\hdots, 4$.  These equations are linear in the variables $z_k\in Z= \{x_{11},x_{12},x_{22},y_{12}\}$. 
Thus for fixed $u_{mn},v_{mn}$, there is a nonzero solution to these equations if and only if 
the determinant of the Jacobian matrix 
\[\left(\frac{\partial g_n}{\partial z_k}\right)\!_{n,k} \;\; = \;\; 
\begin{pmatrix}
u_{11}^2+ v_{11}^2 &   2(u_{11} u_{21} +  v_{11} v_{21}) & u_{21}^2 + v_{21}^2& 2(u_{21} v_{11} - u_{11} v_{21})\\
u_{12}^2+ v_{12}^2 &   2(u_{12} u_{22} +  v_{12} v_{22}) &  u_{22}^2 + v_{22}^2& 2(u_{22} v_{12} - u_{12} v_{22})\\
u_{13}^2+ v_{13}^2 &  2(u_{13} u_{23} +  v_{13} v_{23}) & u_{23}^2 + v_{23}^2&   2(u_{23} v_{13} - u_{13} v_{23})\\
u_{14}^2+ v_{14}^2 &    2(u_{14} u_{24} +  v_{14} v_{24}) &u_{24}^2 + v_{24}^2&  2(u_{24} v_{14} - u_{14} v_{24})
\end{pmatrix} \]
is zero. This is the hypersurface defining $\pi_1(\B_{2,4})$, which has total degree $8$ and degree 2 in the entries of $u_n$ and $v_n$.  
If this determinant is \emph{non-zero}, then the map $\sA_{U+ \ii V}$ is injective. 
\end{Example} 

\begin{Example}[$M=3$, $N=8$]
For fixed $u_{mn}$, $v_{mn}$ the polynomials $g_n$ give $8$ linear equations
in the $9$ variables $Z=\{z_k\} =  \{ x_{11},  x_{12},  x_{13}, x_{22}, x_{23}, x_{33}, y_{12},  y_{13}, y_{23} \}$.
We can solve for this solution symbolically. To do this
consider Jacobian matrix: 
$$J=\left(\frac{\partial g_n}{\partial z_k}\right)\!_{n,k} \;\;\; \mbox{ with } \;\; 1\leq n\leq 8, \quad  1\leq k\leq 9.$$ 
The solution to the equations $g_1=\hdots g_8=0$ is then given by the $8\times 8$ sub-determinants 
$$z_k  \;\; = \;\;  D_k\;\; =\;\; (-1)^k \det( J^{\{k\}})$$ 
where $J^{\{k\}}$ is obtained by erasing the $k$-th column of $J$.  Note that $D_k$ 
has total degree $2\cdot 8$ and  degree $2$ the entries of $u_{n}$ and $v_{n}$ for each $n$. 
This solution gives a $3\times3$ matrix $X+\ii Y$ satisfying the desired equations $g_n=0$. 
In order for the pair $([U,V],[X,Y])$ to belong to $\B_{3,8}$, this matrix $X+\ii Y$ must have rank~$\leq 2$, meaning that its $3\times 3$ determinant, 
 \[ \det \left( \begin{array}{ccc} 
 D_1 &  D_2  + \ii   D_7  &   D_3  + \ii   D_8  \\ 
 D_2 - \ii   D_7  &   D_4  &    D_5  + \ii  D_9  \\
 D_3 - \ii  D_8  &   D_5  - \ii  D_9  &  D_6  \end{array}\right),\]
 must vanish. The vanishing of this determinant defines $\pi_1(\B_{3,8})$. As promised, it has total 
 degree  $2\cdot 8\cdot 3=48$ and degree $2\cdot 3 = 6$ in the entries of $u_n$ and $v_n$ for each $1\leq n\leq 8$. 
 \end{Example}

\begin{Remark}
The set of frames $\Phi$ such that $\sA_{\Phi}$ is not injective is $\pi_1((\B_{M,N})_\R)$. Since projective space is compact,  $\pi_1$ is a closed map with respect to the Euclidean topology. In particular, the locus of frames $\Phi$ for which $\sA_{\Phi}$ is non-injective is closed in the Euclidean topology on $\P((\R^{M\times N})^2)$. 
Note however, that the image of the set of real points of a variety need not be Zariski closed as the example below shows.
This means that there may be real points belonging to the projection $\pi_1(\B_{M,N})$ which are not the projection of real points of $\B_{M,N}$. 
That is, in principle there may be a real point $[U,V]$ in $\pi_1(\B_{M,N})$ 
whose corresponding frame $\Phi = U+\ii V$ is nonetheless injective. 

\begin{Example} \label{ex.realagishard}
Let $X \subset \C^2$ be the parabola defined by $x^2=y$
and let $\pi \colon X \to \C^1$ be the projection onto the second factor.
Since every real number has a complex square root, every point
in $\R$ is the image of a point of $X$. However, if $a < 0$ then
$a$ is not image of a real point of $X$. In particular the image of
$X_\R$ is the closed subset $\{a \geq 0\} \subset \R$. 
Any polynomial vanishing on $\pi(X_{\R})$ vanishes on all of $\R$, so the Zariski 
closure of $\pi(X_{\R})$ is all of $\R$. 
\end{Example}
\end{Remark}

\section{The case of fewer measurements} \label{sec:4M-5}

Here we use our algebraic reformulation to discuss some cases of part \!(a) of the $4M-4$ Conjecture.
We show that when $N\leq 4M-5$ the projection $\pi_1(\B_{M,N})$ fills the entire space and show that 
the projection of the real points $(\B_{M,N})_{\R}$ does this in the case $M =2^k+1$. 

\begin{Prop}\label{prop:partb}
For $N\leq 4M-5$, the projection $\pi_1(\B_{M, N})$ is all of $\P((\C^{M\times N})^2)$.
\end{Prop}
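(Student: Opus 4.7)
The plan is to pass to the isomorphic variety $\B'_{M,N}$ introduced in the proof of Theorem~\ref{thm:dimB} and show that the fiber of $\pi_1$ over every $[U,V]$ is nonempty when $N\leq 4M-5$. Concretely, given any $[U,V] \in \P((\C^{M\times N})^2)$, I would like to produce a nonzero matrix $Q\in\C^{M\times M}$ of rank $\leq 2$ satisfying the $N$ equations $(u_n-\ii v_n)^T Q(u_n+\ii v_n)=0$; then $([U,V],[Q])$ is a point of $\B'_{M,N}$ mapping to $[U,V]$.

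First, I would observe that each of the equations $(u_n-\ii v_n)^T Q(u_n+\ii v_n)=0$ is \emph{linear} in the entries of $Q$. Consequently, for fixed $[U,V]$ the set of matrices $Q\in\P(\C^{M\times M})$ satisfying all $N$ equations is a projective linear subspace $L_{[U,V]}$ of codimension at most $N$. Next, as recorded in the proof of Theorem~\ref{thm:dimB}, the locus $V_2\subset \P(\C^{M\times M})$ of (projectivized) matrices of rank $\leq 2$ is an irreducible projective variety of dimension $4M-5$.

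The key tool is the projective dimension theorem (see e.g.\ \cite[Thm.~I.7.2]{Har:95} or \cite[Thm.~I.6.2]{Shaf:94}): in $\P^n$, any irreducible projective variety of dimension $d$ meets any projective linear subspace of codimension $\leq d$. Applying this with $V_2$ of dimension $4M-5$ and $L_{[U,V]}$ of codimension at most $N\leq 4M-5$, the intersection $V_2\cap L_{[U,V]}$ is nonempty. Any $[Q]$ in this intersection gives a point $([U,V],[Q])\in\B'_{M,N}$ with $\pi_1([U,V],[Q])=[U,V]$. Transferring back to $\B_{M,N}$ via the linear isomorphism $(X,Y)\mapsto X+\ii Y$ used in the proof of Theorem~\ref{thm:dimB} shows $[U,V]\in\pi_1(\B_{M,N})$, and since $[U,V]$ was arbitrary this yields $\pi_1(\B_{M,N})=\P((\C^{M\times N})^2)$.

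There is no real obstacle beyond correctly identifying the ingredients; the only subtle point to double-check is that the equations on $Q$ really are linear (so that their common zero locus is a \emph{linear} subspace, for which the intersection theorem applies cleanly) and that the $4M-5$ from Theorem~\ref{thm:dimB} agrees with the projective dimension of $V_2$. One should also note explicitly why the theorem only gives a statement over $\C$: the projective dimension theorem uses the algebraic closure in an essential way, which is precisely the reason the analogous statement over $\R$ requires a separate argument (as carried out in the following result for $M=2^k+1$).
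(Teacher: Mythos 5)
Your proposal is correct and follows essentially the same route as the paper: the paper also fixes $[U,V]$, notes that the $N$ conditions cut out a projective linear space $L_\Phi$ of dimension at least $M^2-1-N$, and intersects it with the $(4M-5)$-dimensional rank-$\leq 2$ locus using the projective dimension count \cite[Prop.~11.4]{Har:95}, then writes the resulting $Q$ as $X+\ii Y$ to land in $\B_{M,N}$. The only cosmetic difference is that you phrase the argument in $\B'_{M,N}$ and transfer back by the linear isomorphism, which the paper does implicitly by decomposing $Q$ at the end.
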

\begin{proof}
Fix $U$ and $V$ in $\C^{M\times N}$. Each pair of columns $u_n$ and $v_n$ define (at most) one linear 
condition on an $M\times M$ matrix $Q$, namely that $(u_n-\ii v_n)^TQ(u_n+\ii v_n)=0$. 
Thus in total $U$ and $V$ define (at most) $N$ linear conditions. The subvariety of $\P(\C^{M \times M})$ of matrices 
satisfying these linear conditions is a linear subspace 
\[ L_{\Phi} \;\; = \;\; \{Q\in \P(\C^{M\times M}) \;:\; (u_n-\ii v_n)^TQ(u_n+\ii v_n) = 0 \;\;\text{ for each }\;\;1\leq n\leq N\} \]
of dimension at least $M^2-1-N$.

On the other hand, 
the projective variety $H_2 \subset \P(\C^{M \times M})$ of matrices of rank $\leq 2$
has dimension $4M-5$ \cite[Prop. 12.2]{Har:95}.  When $N\leq 4M-5$, 
\[\dim L_\Phi \;+\; \dim H_2 \;\;\geq\;\;  M^2-1.\] 
Thus by \cite[Prop. 11.4]{Har:95}, there is a point in the intersection $L_\Phi \cap H_2$. 
This point corresponds to a matrix $Q$ of rank $\leq 2$ that satisfies the linear equations given by $U$ and $V$. 
We write $Q = X+ \ii Y$ where $X$ is a complex symmetric matrix and Y is complex skew symmetric.
Then $([U,V],[X,Y])$ belongs to $\B_{M, N}$ and $[U,V]$ is its image under the projection $\pi_1$. 
\end{proof}

Furthermore, when $N = 4M-5$, for generic matrices $(U,V) \in (\R^{M\times N})^2$, there should be finitely many 
matrices $Q \in \C^{M\times M}$ in the intersection $L_\Phi \cap H_2$ described above. 
Counting multiplicity, this number is given by the degree of $H_2$, namely 
\begin{equation}\label{eq:dM2}
d_{M,2} \;\; = \;\; \prod_{i=0}^{M-3} \frac{\binom{M+i}{2}}{\binom{2+i}{2}}
\end{equation}
(see for example \cite[Ex. 19.10]{Har:95}). Part \!(a) of the $4M-4$ Conjecture is equivalent to 
there always being a Hermitian matrix among these $d_{M,2}$ complex matrices. 

Both of the sets $L_{\Phi}$ and $H_2$ are invariant under the involution $Q \mapsto Q^*$, and so is
the finite set of matrices in their intersection.  In particular, when the degree $d_{M,2}$ is odd, 
this set must contain a fixed point, i.e. a Hermitian matrix.

\begin{Example}[$M=2$, \;$N= 3$]
\label{ex:M=2}
As shown in \cite{BCMN:13}, here part \!(a) of the $4M-4$ Conjecture holds, meaning that the intersection 
$L_{\Phi}\cap H_2$ contains a Hermitian matrix.  Every matrix has rank $\leq 2$, 
$H_2$ is all of $\C^{2\times 2}$, and $d_{2,2} =1$.  The projective linear space $L_{\Phi}$ is nonempty and invariant under 
the involution $Q \mapsto Q^*$. So it contains a Hermitian matrix. 
In this case, we recover the first part of the $4M-4$ conjecture 
from Proposition~\ref{prop:partb} and Proposition~\ref{prop:algcrit}.  
\end{Example}

\begin{Example}[$M=3$, \;$N= 7$]
\label{ex:M=3}
Here the variety of $\rank \leq 2$ matrices is defined by the $3\times 3$ determinant, meaning 
$d_{3,2}=3$.  Thus for generic $U,V \in \R^{3 \times 7}$, the intersection $L_{\Phi}\cap H_2$ contains three 
complex matrices. Since this intersection is invariant, 
at least one of these must be fixed under the involution  $Q \mapsto Q^*$. 
So in this case, we also recover the first part of the $4M-4$ conjecture 
from Proposition~\ref{prop:partb} and Proposition~\ref{prop:algcrit}.  
\end{Example}

More generally, the projection map 
$$\P((\C^{M \times N})^2) \; \times \; \P\left(\C^{M \times M}_{\sym} \times \C^{M \times M}_{\Skew}\right) \; \to \; \P((\C^{M \times N})^2)$$ 
maps real points to real points, so frames $\Phi$ for which $A_{\Phi}$ is not injective, namely $\pi_1((\B_{M,N})_{\R})$, are contained in the real points of the projection $(\pi_1(\B_{M,N}))_{\R}$. 
If we could show the reverse inclusion, then Proposition~\ref{prop:partb} and Proposition~\ref{prop:algcrit} would imply
the part \!(a) of the $4M-4$ conjecture. Unfortunately, as noted in Example~\ref{ex.realagishard} the image of the set of real points of variety 
need not equal the set of real points of the image.

Despite this subtlety, there is one case where we can use
algebro-geometric methods to prove part \!(a) of the $4M-4$ Conjecture.
\begin{Prop}\label{prop.noninj}
If $M = 2^k +1$ and $N \leq 4M-5$, then $\sA_{\Phi}$ is not injective. 
\end{Prop}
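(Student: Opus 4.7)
The plan is to combine the intersection-theoretic setup of Proposition~\ref{prop:partb} with a parity argument on the involution $Q \mapsto Q^*$, then extend from generic frames to all frames by a closedness argument. First I would reduce to the critical case $N = 4M-5$: if $\sA_{\Phi'}$ is non-injective for a frame $\Phi' \supset \Phi$, then $\sA_\Phi$ is also non-injective (one restricts the obstructing pair of vectors to the original measurements). So it suffices to prove non-injectivity when $N = 4M-5$.

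For $N = 4M-5$, the expected dimensions in the proof of Proposition~\ref{prop:partb} satisfy $\dim L_\Phi + \dim H_2 = (M^2 - 1 - N) + (4M - 5) = M^2 - 1$, so for generic $\Phi$ the intersection $L_\Phi \cap H_2 \subset \P(\C^{M\times M})$ is finite of cardinality $d_{M,2} = \deg H_2$, with all intersection multiplicities equal to one. The key observation is that the involution $Q \mapsto Q^*$ preserves both $L_\Phi$ (since $\overline{\phi_n^* Q \phi_n} = \phi_n^* Q^* \phi_n$) and $H_2$ (since rank is unchanged under conjugate transpose), so it acts on the finite intersection with fixed locus equal to the Hermitian matrices therein. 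When $d_{M,2}$ is odd, any $\Z/2$-action on a set of that cardinality must have a fixed point, producing a non-zero Hermitian $Q$ of rank $\leq 2$ with $\phi_n^* Q \phi_n = 0$ for all $n$; Proposition~\ref{prop:dustin} then yields non-injectivity of $\sA_\Phi$.

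The number-theoretic input is that $d_{M,2}$ is odd when $M = 2^k + 1$. Rewriting \eqref{eq:dM2} as the closed form
\[
d_{M,2} \;=\; \frac{(2M-3)!\,(2M-4)!}{\bigl((M-1)!\bigr)^2 \bigl((M-2)!\bigr)^2}
\]
and applying Legendre's formula $v_2(n!) = n - s_2(n)$, where $s_2(n)$ is the binary digit sum of $n$, one obtains
\[
v_2(d_{M,2}) \;=\; -1 \,-\, s_2(2M-3) \,-\, s_2(2M-4) \,+\, 2\,s_2(M-1) \,+\, 2\,s_2(M-2).
\]
For $M = 2^k + 1$, direct computation gives $s_2(M-1) = 1$, $s_2(M-2) = k$, $s_2(2M-4) = k$, and $s_2(2M-3) = k+1$, whence $v_2(d_{M,2}) = 0$.

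To pass from generic $\Phi$ to all $\Phi$ I would invoke a closure argument: by Proposition~\ref{prop:algcrit}, the set of real frames with non-injective $\sA_\Phi$ equals $\pi_1((\B_{M,N})_\R)$, the image of a compact real projective variety under a continuous map, hence closed in the Euclidean topology on $\P((\R^{M\times N})^2)$. The parity argument exhibits a Zariski-open (and therefore Euclidean-dense) subset of real frames lying in this image, so closedness forces the image to be the entire projective space. The main obstacle is verifying the genericity claim in the second paragraph: namely, that for the specific family $\{L_\Phi : \Phi \text{ generic}\}$ of codimension-$N$ subspaces --- which is parameterized by $\Phi$ rather than being a truly generic codimension-$N$ family --- the intersection $L_\Phi \cap H_2$ really attains the Bezout cardinality $d_{M,2}$ at transverse points, and that this condition is realized over $\R$. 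This ultimately reduces to a Bertini-type transversality check on the incidence variety $\B'_{M,N}$, whose dimension was already computed in Theorem~\ref{thm:dimB}.
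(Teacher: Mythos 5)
Your reduction to $N=4M-5$, your 2-adic computation (which is equivalent to Lemma~\ref{lem.odddegree}; the closed form for \eqref{eq:dM2} and the valuation $v_2(d_{M,2})=0$ for $M=2^k+1$ both check out), and the observation that the non-injective locus $\pi_1((\B_{M,N})_\R)$ is Euclidean-closed are all sound. The genuine gap is exactly the one you flag: your fixed-point count needs the involution $Q\mapsto Q^*$ to act on a finite set of \emph{odd cardinality}, and for that you assert that for generic $\Phi$ the intersection $L_\Phi\cap H_2$ consists of $d_{M,2}$ reduced (transverse) points. This is not established: $L_\Phi$ runs over a constrained family of codimension-$N$ linear spaces, so Bertini for generic linear sections does not apply directly, and you give no substitute (generic finiteness does follow from $\dim\B'_{M,4M-5}=2MN-1$ plus surjectivity of $\pi_1$ and semicontinuity of fiber dimension, but you do not say this, and reducedness needs a further generic-smoothness argument). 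The efficient repair inside your framework is to drop transversality altogether: when the intersection is finite it is proper, so the local intersection multiplicities sum to $d_{M,2}$ (e.g.\ because the determinantal variety $H_2$ is arithmetically Cohen--Macaulay), and the involution preserves these multiplicities, so an odd total already forces a fixed point. Two smaller points: a projective fixed point only gives $Q^*=\lambda Q$ with $|\lambda|=1$, and one must rescale to get an honest Hermitian representative; and in the closure step you should note that the complex-algebraic genericity locus meets the real frames in a nonempty (hence Euclidean-dense) set because $\P((\R^{M\times N})^2)$ is Zariski-dense in $\P((\C^{M\times N})^2)$.

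It is worth comparing with the paper's route, which bypasses the entire generic-plus-closure scaffolding. There, for \emph{every} frame $\Phi$ (no genericity), $L_\Phi\cap H_2$ is a nonempty projective variety defined over $\R$ in the coordinates $(X,Y)$, its degree is odd because $\deg H_2=d_{M,2}$ is odd, and one invokes the classical fact that a projective variety defined over $\R$ of odd degree has a real point; that real point is the desired rank-$\le 2$ Hermitian matrix, and Proposition~\ref{prop:dustin} concludes. This handles possibly positive-dimensional intersections uniformly, so no finiteness, transversality, or density/closedness argument is needed. Your approach can be completed along the lines sketched above, but as written the transversality claim you call ``the main obstacle'' is indeed a missing step rather than a routine check.
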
 
\begin{proof} 
We will show that if $N \leq 4M -5$ then for every pair $[U,V] \in \P(\R^{M\times N}\times \R^{M\times N})$ 
there is some point $[X,Y] \in \P(\R^{M \times M}_{\sym} \times \R^{M \times M}_{\Skew})$ so that 
\[ \rank(X + \ii Y) \leq 2  \;\; \text{ and  }\;\; (u_n -\ii v_n)^T(X + \ii Y)(u_n +\ii v_n) =0 \;\; \text{for all $1 \leq n \leq N$.}\]
This would imply that the map $\sA_\Phi$ is not injective.

Fix any $[U,V]$ in $\P(\R^{M\times N}\times \R^{M\times N})$.
Following the notation from the proof of Proposition~\ref{prop:partb}, 
let $H_2$ denote the variety of matrices $[X,Y]\in \P(\C^{M \times M}_{\sym} \times \C^{M \times M}_{\Skew})$ for which $\rank(X+\ii Y) \leq 2$. Also, let $L_{\Phi}$ 
denote the projective linear space of pairs $[X,Y]$ satisfying the equations $(u_n -\ii v_n)^T(X+\ii Y)(u_n +\ii v_n) =0$. 

Here we use that $M = 2^{k} +1$. By Lemma \ref{lem.odddegree}
below, the subvariety of matrices of rank~$\leq 2$ has odd degree
in $\P(\C^{M \times M})$.  Hence its intersection with a linear subspace 
also has odd degree. We now use the fact that any projective variety 
defined over $\R$ and having odd degree has real point.
Thus the intersection $L_\Phi \cap H_2$ contains a real point. 
\end{proof}
\begin{Lemma} \label{lem.odddegree}
For $M = 2^k +1$, the variety of $M \times M$ matrices of rank~$\leq 2$ has odd degree.
\end{Lemma}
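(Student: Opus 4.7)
The plan is to establish oddness of the degree by a direct $2$-adic computation starting from the closed-form expression \eqref{eq:dM2}.

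First I would rewrite the product $\prod_{i=0}^{M-3} \binom{M+i}{2}/\binom{i+2}{2}$ as a clean ratio of factorials. Expanding each binomial coefficient as a product of two consecutive integers divided by $2$, the factors of $2$ cancel (there are $M-2$ of them in both the numerator and denominator) and the linear terms collapse telescopically, yielding
\[
d_{M,2} \;=\; \frac{(2M-3)!\,(2M-4)!}{\bigl((M-1)!\bigr)^{2}\bigl((M-2)!\bigr)^{2}}.
\]

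Next I would compute the $2$-adic valuation of this expression using Legendre's formula $v_2(n!) = n - s_2(n)$, where $s_2(n)$ denotes the sum of the binary digits of $n$. The linear terms all cancel and the result simplifies to
\[
v_2(d_{M,2}) \;=\; -1 \,+\, 2\,s_2(M-1) \,+\, 2\,s_2(M-2) \,-\, s_2(2M-3) \,-\, s_2(2M-4).
\]

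Finally I would specialize to $M = 2^k+1$, where every relevant binary expansion is elementary: $M-1 = 2^k$, so $s_2(M-1)=1$; $M-2 = 2^k-1$ is a block of $k$ ones, so $s_2(M-2)=k$; multiplication by $2$ is a digit shift, so $s_2(2M-4) = s_2(M-2) = k$; and $2M-3 = 2^{k+1}-1$ gives $s_2(2M-3)=k+1$. Substituting yields $v_2(d_{M,2}) = -1 + 2 + 2k - (k+1) - k = 0$, so $d_{M,2}$ is odd.

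The main obstacle is the opening telescoping step: one has to track the shifted factorial products $\prod_i (M+i)$ and $\prod_i (M+i-1)$ against $\prod_i (i+1)$ and $\prod_i (i+2)$ and verify that the $2^{M-2}$ arising in the binomial denominators really cancels between numerator and denominator. Once the factorial ratio is in hand, the remaining $2$-adic analysis is mechanical, and the hypothesis $M = 2^k+1$ is chosen precisely so that every digit sum is trivial to evaluate by inspection.
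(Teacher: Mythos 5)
Your proof is correct, and it takes essentially the same route as the paper: both compute the $2$-adic valuation of the closed-form degree $d_{M,2}$ via binary digit sums (Legendre's formula) and exploit that $M=2^k+1$ makes all the relevant digit sums trivial. The only difference is bookkeeping --- you first telescope the product into the factorial ratio $\frac{(2M-3)!\,(2M-4)!}{((M-1)!)^2((M-2)!)^2}$ and apply $v_2(n!)=n-s_2(n)$, whereas the paper applies the digit-sum formula to each binomial coefficient and cancels the resulting sums directly.
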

\begin{proof}
We use the classical formula for the degree \eqref{eq:dM2} of the variety of $M\times M$ matrices of 
rank at most two. 
Using Legendre's formula for the highest power of a prime dividing a factorial one can compute the highest power
of any prime dividing a binomial coefficient. Specifically, the highest
power of $p$ dividing $\binom{n+r}{r}$ is $s_p(n) + s_p(r) - s_p(n+r)$
where $s_p(n)$ is the sum of the digits in the base $p$ expansion of $n$.
Using this formula we see that the highest power of $2$ dividing $d_{M,2}$
is 
\begin{equation} \label{eq.1step}
 \left( \sum_{i=0}^{M-3} s_2(M+i) -s_2(M+i-2) \right) - \left(\sum_{i=0}^{M-3}
s_2(i+2) -s_2(i)\right).
\end{equation} 
Now if $M = 2^k+1$ and $0 \leq n \leq M-2$ then $s_2(M-1 + n)= s_2(n) +1$, so most of the terms in the two summations of \eqref{eq.1step} cancel leaving just four terms
\begin{equation} \label{eq.2step}
\left(s_2(M) -s_2(M-2)  \right) - \left(s_2(M-1) - s_2(M-3)\right).
\end{equation}
When $M = 2^{k} + 1$, $s_2(M)  =2$, $s_2(M-1) =1$, $s_2(M-2)=k-1$ and $s_2(M-3) = k-2$. Hence the expression in \eqref{eq.2step} is zero and $d_{M,2}$ is odd.
\end{proof}

Checking these formulas more closely reveals that $d_{M,2}$ is odd if and only if $M = 2^k+1$.  
Therefore a different approach would be necessary to prove the conjecture for other $M$.

\begin{Remark}
Proposition \ref{prop.noninj} is similar to, but does not seem to follow from, previous results \cite{D:08, HMW:13}. 
Heinosaari, Mazzarella and Wolf use embedding results from topology to show that when $N \leq 4M -2s_2(M-1) -4$, 
the map $\sA_{\Phi}$ is never injective \cite{HMW:13}.  In particular, if $M = 2^k +1$ then $s_2(M-1)=1$, and this bound gives 
 $N\leq 4M-6$, rather than $N\leq 4M-5$.
\end{Remark}

We end by rephrasing part~(a) of the $4M-4$ Conjecture. The first open case is $M=4$. 
\begin{Conjecture}\label{conj}
Let $\phi_1, \hdots, \phi_{4M-5}\in\C^M$ and consider the linear space $L_{\Phi}$ of 
$\C^{M\times M}_{\rm Herm}$, 
\[L_{\Phi} \;=\; \{Q \;:\; \phi_n^*Q\phi_n = 0 \;\text{ for }n=1, \hdots, 4M-5\}  
\; = \; {\rm span}\{\phi_1^{\:}\phi_1^*,\hdots, \phi_{4M-5}^{\:}\phi_{4M-5}^*\}^{\perp}.\]  
The $4M-4$ Conjecture states that $L_{\Phi}\subset \C^{M\times M}_{\rm Herm}$ always contains a matrix of rank $\leq 2$.
In other words, if we take $d=(M-2)^2+1$ Hermitian matrices $A_1, \hdots, A_{d}$
spanning $L_{\Phi}$, there is some linear combination
$x_1A_1+\hdots + x_{d}A_{d}$ with rank two.
\end{Conjecture}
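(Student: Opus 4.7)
The plan is to extend the parity argument of Proposition~\ref{prop.noninj} from the case $M = 2^k+1$ to arbitrary $M$. Regarding $L_\Phi$ as the Hermitian trace of a complex linear subspace of $\P(\C^{M\times M})$, Proposition~\ref{prop:partb} guarantees that the complex intersection $L_\Phi \cap H_2$ is non-empty, and for generic $\Phi$ with $N = 4M-5$ it is zero-dimensional of degree $d_{M,2}$ given by \eqref{eq:dM2}. Both $L_\Phi$ and $H_2$ are invariant under the involution $Q \mapsto Q^*$, whose fixed locus consists of the Hermitian matrices, so the number of rank-$\leq 2$ Hermitian matrices in $L_\Phi$ has the same parity as $d_{M,2}$. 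By Lemma~\ref{lem.odddegree} this parity is odd precisely when $M = 2^k+1$, which is exactly how Proposition~\ref{prop.noninj} produces the conjecture in that case. For all other $M$ the degree $d_{M,2}$ is even and the naive parity argument collapses, so a more refined invariant is required.

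My first step would be to reformulate the problem by parametrizing rank-$\leq 2$ Hermitian matrices as $\varepsilon(vv^* + s\, ww^*)$ with $\varepsilon, s \in \{-1,+1\}$ and $v, w \in \C^M$. The equation $\phi_n^* Q \phi_n = 0$ then becomes $|\phi_n^*v|^2 + s|\phi_n^*w|^2 = 0$; the case $s = +1$ forces $\phi_n^*v = \phi_n^*w = 0$ for every $n$ and contradicts the frame assumption, so the conjecture reduces to showing that for every frame $\phi_1, \dots, \phi_{4M-5}$ there exist $v, w \in \C^M$ with $|\phi_n^*v| = |\phi_n^*w|$ for each $n$ and $vv^* \neq ww^*$. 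This is a system of $N = 4M-5$ real quadratic equations in $4M$ real unknowns with an obvious $(v,w) \leftrightarrow (w,v)$ symmetry and a trivial diagonal $v = w$ of degenerate solutions. In this form one can try to produce a nontrivial solution either by continuity from a well-chosen special frame or by a direct signature analysis of the quadratic map $(v, w) \mapsto (|\phi_n^*v|^2 - |\phi_n^*w|^2)_{n=1}^N$.

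The heart of a general proof must replace the mod-$2$ count by a genuinely real invariant. Two approaches I would pursue are: (a) compute an equivariant class $[L_\Phi \cap H_2]$ in $\Z/2$-equivariant cohomology of $\P(\C^{M\times M})$ with respect to the $Q \mapsto Q^*$ action, whose non-vanishing would force a fixed point and hence a Hermitian solution; and (b) exploit the signature stratification of the real variety of Hermitian rank-$\leq 2$ matrices (positive semidefinite, negative semidefinite, indefinite) together with a Welschinger-style signed count on its indefinite open stratum, which is the only stratum that can meet $L_\Phi$ once $\Phi$ is a frame. The main obstacle, made explicit in Example~\ref{ex.realagishard}, is that the real image $\pi_1((\B_{M,N})_{\R})$ can be strictly smaller than $(\pi_1(\B_{M,N}))_{\R}$; consequently the complex dimension and degree arguments that drive Theorem~\ref{thm:main} are fundamentally insufficient here, and producing a real invariant that is non-vanishing for every frame and every $M$ will require techniques genuinely outside the algebraic toolkit of this paper. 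This is consistent with the conjecture remaining open even in the first unknown case $M = 4$.
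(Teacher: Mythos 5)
You were asked to prove the statement labelled Conjecture~\ref{conj}, which is the paper's own rephrasing of part~(a) of the $4M-4$ Conjecture; the paper does not prove it and explicitly records it as open, with first open case $M=4$. What the paper does contain is exactly the partial progress your first paragraph describes: Proposition~\ref{prop:partb} (non-emptiness of the complex intersection $L_\Phi\cap H_2$ for $N\leq 4M-5$), the invariance of $L_\Phi$ and $H_2$ under $Q\mapsto Q^*$, and the odd-degree argument of Proposition~\ref{prop.noninj} and Lemma~\ref{lem.odddegree}, which settles the case $M=2^k+1$; your closing observation about the real image being possibly smaller than the real points of the image is the paper's Example~\ref{ex.realagishard}. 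So on the known cases your proposal coincides with the paper, and it is honest that it does not go further.

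The gap is that the general statement is never proved, and the two strategies you sketch are not only left unexecuted but, as formulated, cannot close it. For (a): any $\Z/2$-equivariant or mod-$2$ count of $L_\Phi\cap H_2$ reduces, on the locus where the intersection is finite, to the parity of $d_{M,2}$ in \eqref{eq:dM2}, and by the remark following Lemma~\ref{lem.odddegree} this parity is odd \emph{only} when $M=2^k+1$; so the proposed invariant vanishes identically in precisely the open cases, and no repackaging in equivariant cohomology changes that. For (b): a Welschinger-style signed count would need a sign rule making the count independent of $\Phi$, a proof that it is nonzero, and a treatment of non-generic frames where $L_\Phi\cap H_2$ is positive-dimensional; none of this is supplied, and the conjecture demands a conclusion for \emph{every} $\Phi$, not a generic one. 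Two smaller inaccuracies: the claim that the number of Hermitian points in $L_\Phi\cap H_2$ has the same parity as $d_{M,2}$ requires finiteness and counting with multiplicity (the paper instead invokes that an odd-degree variety defined over $\R$ has a real point); and your reduction of rank-$\leq 2$ Hermitian matrices to $\varepsilon(vv^*+s\,ww^*)$ tacitly assumes the $\phi_n$ span $\C^M$ (otherwise the conjecture is trivially true by taking $vv^*$ with $v$ orthogonal to all $\phi_n$), and in the spanning case it merely restates non-injectivity of $\sA_\Phi$ via Proposition~\ref{prop:dustin}, so it yields no new leverage. In short, your text is an accurate survey of Section~\ref{sec:4M-5} plus speculation, but it is not a proof of the statement --- nor does the paper contain one.
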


\bigskip

{\bf Acknowledgements.} We are very grateful to the conference held at the AIM conference 
``Frame theory intersects geometry'' held in Summer 2013 and many of the people attending.  Among others, 
we would especially like to thank Bernhard Bodmann, Jameson Cahill, Matthew Fickus, Dustin Mixon, and Vlad Voroninski for helpful discussions. Aldo Conca would like to thank Carlos D'Andrea and Laurent Bus\'e for useful discussions concerning elimination and discriminants. Milena Hering was supported by NSF grant DMS-1001859.
Cynthia Vinzant was supported by an NSF postdoc DMS-1204447. 

\bigskip

%\nocite{049.1315cj}
\bibliography{refs}{}

\def\cprime{$'$}
\begin{thebibliography}{10}

\bibitem{BCE:06}
Radu Balan, Pete Casazza, and Dan Edidin.
\newblock On signal reconstruction without phase.
\newblock {\em Appl. Comput. Harmon. Anal.}, 20(3):345--356, 2006.

\bibitem{BCMN:13}
A.~Bandeira, J.~Cahill, D.~Mixon, and A.~Nelson.
\newblock Saving phase: Injectivity and stability for phase retrieval.
\newblock {\em arXiv:1302.4618}, 2013.

\bibitem{BoHa:13}
B.~Bodmann and N.~Hammen.
\newblock Stable phase retrieval with low-redundancy frames.
\newblock {\em arXiv:1302.5487}, 2013.

\bibitem{CLO:07}
David Cox, John Little, and Donal O'Shea.
\newblock {\em Ideals, varieties, and algorithms}.
\newblock Undergraduate Texts in Mathematics. Springer, New York, third
  edition, 2007.
\newblock An introduction to computational algebraic geometry and commutative
  algebra.

\bibitem{CLO:05}
David~A. Cox, John Little, and Donal O'Shea.
\newblock {\em Using algebraic geometry}, volume 185 of {\em Graduate Texts in
  Mathematics}.
\newblock Springer, New York, second edition, 2005.

\bibitem{DaDi:01}
Carlos D'Andrea and Alicia Dickenstein.
\newblock Explicit formulas for the multivariate resultant.
\newblock {\em J. Pure Appl. Algebra}, 164(1-2):59--86, 2001.
\newblock Effective methods in algebraic geometry (Bath, 2000).

\bibitem{D:08}
Donald~M. Davis.
\newblock Some new immersion results for complex projective space.
\newblock {\em Proc. Edinb. Math. Soc. (2)}, 51(1):45--56, 2008.

\bibitem{GKZ:94}
I.~M. Gel{\cprime}fand, M.~M. Kapranov, and A.~V. Zelevinsky.
\newblock {\em Discriminants, resultants, and multidimensional determinants}.
\newblock Mathematics: Theory \& Applications. Birkh\"auser Boston Inc.,
  Boston, MA, 1994.

\bibitem{Har:95}
Joe Harris.
\newblock {\em Algebraic geometry}, volume 133 of {\em Graduate Texts in
  Mathematics}.
\newblock Springer-Verlag, New York, 1992.
\newblock A first course, Corrected reprint of the 1992 original.

\bibitem{HMW:13}
Teiko Heinosaari, Luca Mazzarella, and Michael~M. Wolf.
\newblock Quantum tomography under prior information.
\newblock {\em Comm. Math. Phys.}, 318(2):355--374, 2013.

\bibitem{Shaf:94}
Igor~R. Shafarevich.
\newblock {\em Basic algebraic geometry. 1}.
\newblock Springer-Verlag, Berlin, second edition, 1994.
\newblock Varieties in projective space, Translated from the 1988 Russian
  edition and with notes by Miles Reid.

\end{thebibliography}
\bibliographystyle{plain}

\pagebreak

\footnotesize
\noindent Aldo Conca ({\tt conca@dima.unige.it})\\ 
Department of Mathematics,\\ 
University of Genova,\\  
Via Dodecaneso 35,\\
 I-16146 Genova, Italy\\  \smallskip

\noindent Dan Edidin ({\tt edidind@missouri.edu})\\
Department of Mathematics,\\ 
University of Missouri,\\ 
Columbia, Missouri 65211 USA\\  \smallskip

\noindent Milena Hering ({\tt m.hering@ed.ac.uk})\\
School of Mathematics and Maxwell Institute of Mathematics,\\ 
University of Edinburgh,\\ Edinburgh, EH9 3JZ, UK\\
 \smallskip

\noindent Cynthia Vinzant ({\tt vinzant@umich.edu})\\
Department of Mathematics,\\ University of Michigan,\\ Ann Arbor, MI 48109, USA
%\smallskip

\end{document}